\date{\today}
\newtheorem{lemma}{Lemma}
\newtheorem{proposition}{Proposition}
\newtheorem{example}{Example}
\theoremstyle{definition}
\def\0{\emptyset}
\def\phi{\varphi}
\def\bs{\backslash}
\def\ol{\overline}
\def\int{\operatorname{int}}
\begin{document}

\title[A note on compact-like semitopological groups]
{A note on compact-like semitopological groups}
\author{Alex Ravsky}
\email{alexander.ravsky@uni-wuerzburg.de}
\address{Department of Analysis, Geometry and Topology, Pidstryhach Institute for Applied Problems of Mechanics and Mathematics
National Academy of Sciences of Ukraine
Naukova 3-b, Lviv, 79060, Ukraine}
\keywords{semitopological group,
paratopological group,
compact-like semitopological group,
compact-like paratopological group,
continuity of the inverse,
joint continuity,
separation axioms,
countably compact paratopological group,
pseudocompact topological group,
countably compact topological group,
countably pracompact space.}
\subjclass{22A15, 54H99,54H11}
\begin{abstract}
The note contains a few results related to separation axioms and automatic continuity of
operations in compact-like semitopological groups. In particular, is presented
a semiregular semitopological group $G$ which is not $T_3$. We show that
each weakly semiregular compact semitopological group is a topological group.
On the other hand, constructed examples of quasiregular
$T_1$ compact and $T_2$ sequentially compact quasitopological groups, which are not
paratopological groups.
Also we prove that a semitopological group $(G,\tau)$ is a topological group
provided there exists a Hausdorff topology $\sigma\supset\tau$ on $G$
such that $(G,\sigma)$ is a precompact topological group and $(G,\tau)$ is weakly
semiregular or $(G,\sigma)$ is a feebly compact paratopological group and
$(G,\tau)$ is $T_3$.
\end{abstract}

\maketitle \baselineskip15pt
\section{Preliminaries}

In this paper the word "space" means "topological space".

\subsection{Topologized groups} A topologized group $(G,\tau)$ is a
group $G$ endowed with a topology $\tau$. It is called a
{\it semitopological group} provided the multiplication map $G\times G\to G$, $(x,y)\mapsto xy$
is separately continuous.
Moreover, if the multiplication is continuous then $G$ is called a {\it paratopological group}.
A semitopological group with the continuous inversion map $G\to G$, $x\mapsto x^{-1}$
is called a {\it quasitopological group}.
A topologized group which is both paratopological and quasitopological is called
a {\it topological group}.

Whereas investigation of topological groups already is one of fundamental branches of topological
algebra (see, for instance,~\cite{Pon},~\cite{DikProSto}, and~\cite{ArhTka}),
other topologized groups are not so well-investigated and have more variable structure.

Basic properties of semitopological or paratopological groups
are described in book \cite{ArhTka} by Arhangel'skii
and Tkachenko, in author's PhD thesis~\cite{Rav3} and
papers~\cite{Rav}, ~\cite{Rav2}. New Tkachenko's survey~\cite{Tka}
presents recent advances in this area.

\subsection{Separation axioms} These axioms describe specific structural properties of a space.
Basic separation axioms and relations between them are considered in~\cite[Section 1.5]{Eng}.
For more specific cases and topics, also related to semitopological and paratopological groups,
see ~\cite{Rav2}, ~\cite{BanRav2},
~\cite[Section 2]{Tka},~\cite{Tka2},~\cite{XLT}.

All spaces considered in the present paper are {\it not} supposed to satisfy
any of the separation axioms, if otherwise is not stated.
We recall separation axioms which we use in our paper. A space $X$ is
\begin{itemize}
\item {\it $T_0$}, if for any distinct points $x,y\in X$ there exists an open set
$U\subset X$, which contains exactly one of the points $x$, $y$,
\item {\it $T_1$}, if for any distinct points $x,y\in X$ there exists an open set
$x\in U\subset X\setminus\{y\}$,
\item {\it $T_2$} or {\it Hausdorff}, if any distinct points $x,y\in X$ have disjoint
neighborhoods,
\item {\it $T_3$}, if any closed set $F\subset X$ and any point $x\in X\bs F$ have
disjoint neighborhoods,
\item {\it regular}, if it is $T_1$ and $T_3$,
\item {\it quasiregular}, if any nonempty open subset $A$ of $X$ contains the closure
of some nonempty open subset $B$ of $X$,
\item {\it weakly semiregular}, if $X$ has a base consisting of {\it regular open} sets,
that is such sets $U$ that $U=\int\ol U$,
\item {\it semiregular}, if it is weakly semiregular and $T_2$,
\item {\it functionally $T_2$} of {\it functionally Hausdorff}, if for any distinct points $x,y\in X$
there exists a continuous function $f:X\to\mathbb R$ such that $f(x)\ne f(y)$,
\item {\it $T_{3\frac 12}$} or {\it completely regular}, if it is $T_1$ and for any closed set
$F\subset X$ and any point $x\in X\setminus F$ there exists a continuous function
$f:X\to\mathbb R$ such that $f(x)=0$ and $f(F)\subset \{1\}$.
\end{itemize}

Remark that each $T_3$ space is quasiregular and weakly semiregular, so
each regular space is semiregular.

\subsection{Separation axioms in semitopological groups}

It is easy to show that each topological group is $T_3$.
Near 1936 Pontrjagin showed
that each $T_0$ topological group is completely regular and $T_1$.

On the other hand, simple examples shows that
for paratopological groups neither of the implications
$T_0\Rightarrow T_1 \Rightarrow T_2 \Rightarrow T_3$
is necessary (see \cite[Examples 1.6-1.8]{Rav} and
page 5 in any of papers \cite{Rav2} or \cite{Tka})
and there are only a few backwards implications between different separation axioms, see
~\cite[Section 1]{Rav2} or~\cite[Section 2]{Tka}.
Moreover, in 2014 Banakh and the author of the
present paper similarly to Pontrjagin's proof
showed that each $T_1$ weakly semiregular paratopological group is $T_{3\frac 12}$ and each
$T_2$ paratopological group
is functionally $T_2$ \cite{BanRav2}.
On the other hand, Banakh's announcement for a seminar for 28 November 2016 (see ~\cite{TA}) claims
on an example of a regular quasitopological group which is not functionally Hausdorff.

It is easy to show that each weakly semiregular paratopological group is $T_3$~\cite[Proposition 1.5]{Rav2},
but there exists a semiregular semitopological group $G$ which is not $T_3$,
see Example~\ref{ex:1}.
On the other hand, in Proposition~\ref{T0toT2} we shall prove that each $T_0$ weakly semiregular
semitopological group is semiregular.

Given a topological space $(X,\tau)$ Stone \cite{Sto} and Kat\u{e}tov
\cite{Kat} considered the topology $\tau_{sr}$ on $X$ generated by
the base consisting of all regular open sets of the space
$(X,\tau)$. This topology is called the {\it semiregularization} of
the topology $\tau$.
If $(X,\tau)$ is a semitopological group then $(X,\tau_{sr})$
is a weakly semiregular semitopological group (see \cite[p. 96]{Rav2}).
If $(X,\tau)$ is a paratopological group then $(X,\tau_{sr})$
is a $T_3$ paratopological group \cite[Ex. 1.9]{Rav2}, \cite[p. 31]{Rav3}, and \cite[p. 28]{Rav3}.

\subsection{Compact-like spaces} Different classes of compact-like spaces and relations between them
provide a well-known investigation topic of general topology, see, for instance, basic \cite[Chap.
3]{Eng} and general \cite{Vau}, \cite{Ste}, \cite{DRRT}, \cite{Mat}, \cite{Lip} works.
The including relations between the classes are often visually represented by arrow diagrams, see,
\cite[Diag. 3 at p.17]{Mat}, \cite[Diag. 1 at p. 58]{Dor-AldSha} (for completely regular
spaces), \cite[Diag. 3.6 at p. 611]{Ste}, and~\cite[Diag. at p. 3]{GutRav2}.

We recall the definitions of compact-like spaces with which we shall deal in the paper.
A space $X$ is called
\begin{itemize}
\item {\it sequentially compact}, if each sequence of $X$ contains
a convergent subsequence,
\item {\it countably compact at a subset} $A$ of $X$, if each infinite
subset $B$ of $A$ has an accumulation point $x$ in the space $X$
(the latter means that each neighborhood of $x$ contains infinitely many points of the set $B$),
\item {\it countably compact}, if $X$ is countably compact at itself, or, equivalently,
if each countable open over of $X$ has a finite subcover,
\item {\it countably pracompact}, if $X$ is countably compact at a dense subset of $X$,
\item {\it feebly compact}, if each locally finite family of nonempty open subsets of the space $X$ is
finite,
\item {\it pseudocompact}, if $X$ is $T_1$ completely regular and each continuous real-valued function on $X$ is bounded.
\end{itemize}

It is well-known and easy to show that each (sequentially) compact space is countable compact,
countable compact space is  countably pracompact, and each countable pracompact
compact space is feebly compact.  Moreover, by~\cite[Theorem 3.10.22]{Eng}
a $T_1$ completely regular space is feebly compact iff it is pseudocompact.




\subsection{Automatic continuity of operations in semitopological groups}
It turned out that if a space
of a semitopological (resp. paratopological)
group satisfies some conditions (sometimes with some conditions imposed on the group) then the
multiplication (resp. inversion) in the group is continuous, that is the group is topological
(resp. paratopological).
Investigation of these conditions is one of main 
branches of the theory of paratopological groups, and, as far as the author knows,
the firstly developed that.
It turned out that automatic continuity essentially depends on  compact-like properties and
separation axioms of the space of a semitopological group.
An interested reader can find known results and references on this subject
in the survey Section 5.1 of~\cite{Rav3} and in Section 3 of the survey~\cite{Tka} (both for semitopological and paratopological groups)
and in Introduction of~\cite{AlaSan} and~\cite{Rav4}(for paratopological groups).

We briefly recall the history of the topic.
In 1936 Montgomery~\cite{Mon} showed that every completely metrizable paratopological group is a
topological group. In 1953 Wallace
~\cite{Wal} asked whether every locally compact regular semitopological
group a topological group. In 1957 Ellis obtained a positive
answer of the Wallace question (see~\cite{Ell1}, \cite{Ell2})
(remark that later the author of the present paper showed that regularity condition can be relaxed,
see Proposition 5.5 in~\cite{Rav3} or its counterpart in English in~\cite{Rav5}). In 1960 Zelazko used Montgomery's
result and showed that each completely metrizable semitopological
group is a topological group.
Since both locally compact and
completely metrizable topological spaces are \u{C}ech-complete (recall that
\u{C}ech-complete spaces are $G_\delta$-subspaces of Hausdorff compact spaces),
this suggested Pfister ~\cite{Pfi} in 1985 to ask whether each
\u{C}ech-complete semitopological group  a topological group. In
1996 Bouziad~\cite{Bou} and Reznichenko~\cite{Rez2}, as far as the author knows,  independently
answered affirmatively to the Pfister's question. To do this, it
was sufficient to show that each \u{C}ech-complete
semitopological group is a paratopological group since earlier, Brand~\cite{Bra}
had proved that every \u{C}ech-complete paratopological group is a topological group.
Brand's proof was later improved and simplified in~\cite{Pfi}.
For recent advances in this topic see Moors' paper~\cite{Moo} and references there.

If $G$ is a paratopological group which is a $T_1$ space and $G\times G$ is countably compact (in
particular, if $G$ is sequentially compact) then $G$ is a topological group, see~\cite{RavRez}. On the
other hand, we cannot weaken $T_1$ to $T_0$ here, because there exists a sequentially compact $T_0$
paratopological group which is not a topological group, see Example 5 from~\cite{Rav4}.
Also we cannot weaken countable compactness of $G\times G$ to that of $G$ because under additional axiomatic
assumptions  there exists a countably compact (free abelian) paratopological group which is not a topological group,
see~\cite[Example 2]{Rav4}.
Also there exists a functionally Hausdorff second countable paratopological group $G$ such that
each power of $G$ is countably pracompact (and hence feebly compact), but $G$ is not a topological
group, see~\cite[Example 3]{Rav4}.
On the other hand, by Proposition 3 from~\cite{Rav4} each
feebly compact quasiregular paratopological group is a topological group.
In particular, each pseudocompact paratopological group is a topological group.

The group of integers $(\mathbb Z,+)$ endowed with the cofinite topology is
a $T_1$ compact semitopological group which is not a paratopological group.
On the other hand, it is easy to check that each $T_1$ regular countably compact space is
strongly Baire (see,~\cite[p.158]{KKM} for definition), so by~\cite[Theorem 2]{KKM}, each
$T_1$ regular countably compact semitopological group $G$ is a topological group. Nevertheless,
there exists a pseudocompact quasitopological group $G$ of period $2$,
which is not a paratopological group, (see \cite{Kor1},~\cite{Kor2}, and also~\cite[p.124-127]{ArhTka}).
On the other hand, Reznichenko in~\cite[Theorem 2.5]{Rez} showed
that each semitopological group $G\in \mathcal N$ is a
topological group, where $\mathcal N$ is a family of all pseudocompact spaces $X$ such that
$(X,X)$ is a \emph{Grothendieck pair}, that is
if each continuous image of $X$ in $C_p(Y)$ has the compact closure in $C_p(Y)$.
In particular, a pseudocompact space $X$ belongs to $\mathcal N$ provided
$X$ has one of the following properties: countable compactness, countable tightness,
separability, $X$ is a $k$-space, see~\cite{Rez}.
Also is known that every pseudocompact semitopological group of countable $\pi$-character
is a compact metrizable topological group, see \cite[Corollary 5.7.27]{ArhTka}.
Arhangel'skii, Choban, and Kenderov proved in \cite[Proposition 8.5]{ArhChoKen}
that a $T_2$ locally countably compact semitopological
group containing a compact of countable character is a paracompact locally compact topological group.

In the present paper we show that each weakly semiregular compact semitopological group $G$ is a topological
group, see Proposition~\ref{pr:3}. On the other hand, we construct examples of quasiregular
$T_1$ compact and $T_2$ sequentially compact quasitopological groups, which are not
paratopological groups, see Examples~\ref{ex:2} and~\ref{ex:3}, respectively.

\section{Results}

\begin{example}\label{ex:1} There exists a semiregular semitopological group $G$ which is not $T_3$.
Put $G=(\mathbb R^2,+)$ and $\mathcal B=\{U_n:0<n\in\mathbb N\}$, where
$U_{n}=\{0\}\cup \{(x,y)\in\mathbb R^2:|y|<|x|<1/n\}$ for each $n$. Put
$\tau=\{V\subset G: (\forall x\in V)(\exists U\in\mathcal B):x+U\subset V\}$.
It is easy to check that $(G,\tau)$ is a semitopological semigroup and
$\mathcal B$ is its base at the unit. Let $\sigma$ be the standard topology of $\mathbb R^2$.
Since $\tau\supset\sigma,$ the group $(G,\tau)$ is $T_2$. Since
$\operatorname{int_\tau}\overline{U}^\sigma=U$ for each $U\in\mathcal B$,
the  group $(G,\tau)$ has a base $\{x+U:x\in G, U\in\mathcal B\}$, consisting of
regular open sets. But the group $(G,\tau)$ is not $T_3$, because
$U_1\not\supset \overline{U_n}^\tau$ for each $n$.
\end{example}

Let $G$ be a semitopological group and $H\subset G$ be a normal subgroup of $G$.
It is easy to check that the quotient group $G/H$
endowed with the quotient topology with respect to the quotient map $\pi:G\to G/H$
is a semitopological group.

\begin{lemma}\label{TkaT0} {\em (}see,~\cite[Theorem 3.1 and Corollary 3.2]{Tka2}{\em)} Let $(G,\tau)$ be a semitopological group,
$N=\bigcap\{U:e\in U\in\tau\}$ and $K=N\cap N^{-1}$. Then $K$ is a normal subgroup of
the group $G$ and $T_0G=G/K$ is a $T_0$ semitopological group. Moreover, let
$\pi: G\to G/K$ be the quotient homomorphism. Then $U=\pi^{-1}\pi(U)$
for each open set $U\subset G$ and hence the map $\pi$ is clopen.
\end{lemma}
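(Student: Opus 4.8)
The plan is to prove each assertion in Lemma~\ref{TkaT0} in sequence, building on the structure of the set $N = \bigcap\{U : e \in U \in \tau\}$, which is the intersection of all open neighborhoods of the identity.

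First I would establish the basic algebraic properties of $N$. Since $(G,\tau)$ is a semitopological group, for each fixed $g \in G$ the left and right translations are homeomorphisms, so the family of open neighborhoods of any point is a translate of the neighborhood filter at $e$. The key observation is that $N$ is precisely the closure $\overline{\{e\}}$ of the singleton $\{e\}$, because $x \in \overline{\{e\}}$ iff every open $U \ni x$ meets $\{e\}$, which by translating is equivalent to $e$ lying in every open neighborhood... more directly, $x \in N$ iff $x$ belongs to every open set containing $e$. I would check that $N$ is closed under multiplication: if $x,y \in N$ and $U \ni e$ is open, then separate continuity gives an open $V \ni e$ with $xV \subset U$ (using that $x^{-1}U$ is open and contains $e$ since $x \in U$), whence $y \in V$ forces $xy \in xV \subset U$, so $xy \in N$. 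This shows $N$ is a subsemigroup; then $K = N \cap N^{-1}$ is manifestly symmetric and, being an intersection of two subsemigroups one of which is the inverse of the other, I would verify it is closed under products and inverses, hence a subgroup.

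Next I would prove that $K$ is normal and that the quotient is $T_0$. For normality, I expect to show $N$ is invariant under inner automorphisms: for $g \in G$, the map $x \mapsto gxg^{-1}$ is a homeomorphism fixing $e$, so it permutes the open neighborhoods of $e$ and therefore fixes their intersection $N$ setwise; hence $gNg^{-1} = N$, and intersecting with the analogous statement for $N^{-1}$ gives $gKg^{-1} = K$. For the $T_0$ property of $G/K$, I would argue that $K$ is exactly the set of points not separated from $e$ by the topology in the two-sided sense, so that in the quotient any two distinct cosets are topologically distinguishable; the separate continuity of the quotient operations follows because $\pi$ is continuous and open (the latter because images of open sets under an open quotient map are open, which I would confirm below).

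The main step, and the one I expect to be the central technical point, is the identity $U = \pi^{-1}\pi(U)$ for every open $U \subset G$, which simultaneously yields that $\pi$ is open and clopen. The inclusion $U \subset \pi^{-1}\pi(U)$ is trivial. For the reverse, suppose $x \in \pi^{-1}\pi(U)$, so $\pi(x) = \pi(u)$ for some $u \in U$, meaning $xu^{-1} \in K \subset N$. I would then show $x \in U$: since $u \in U$ and translation is a homeomorphism, $Uu^{-1}$ is an open neighborhood of $e$, so $N \subset Uu^{-1}$ by definition of $N$; hence $xu^{-1} \in N \subset Uu^{-1}$ gives $x \in U$. This establishes $\pi^{-1}\pi(U) \subset U$. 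Consequently $\pi(U)$ is open in the quotient topology (since its preimage $U$ is open) and $G \setminus U = \pi^{-1}(\,(G/K)\setminus \pi(U)\,)$ shows $\pi(U)$ has open complement, so $\pi(U)$ is clopen, making $\pi$ a clopen map. The only delicacy is ensuring the neighborhood manipulations use only separate continuity, never joint continuity, since $G$ is merely semitopological; translations remain homeomorphisms in this weaker setting, which is exactly what every step above relies upon.
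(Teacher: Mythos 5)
The paper itself does not prove this lemma --- it is imported from Tkachenko~\cite[Theorem 3.1 and Corollary 3.2]{Tka2} --- so there is no internal proof to compare against; judged on its own terms, most of your argument is correct and uses only what a semitopological group provides (translations are homeomorphisms): the subsemigroup property of $N$, the symmetry and normality of $K$ via conjugation, the saturation identity $U=\pi^{-1}\pi(U)$ (from $xu^{-1}\in K\subset N\subset Uu^{-1}$), and the openness of $\pi$ all go through. Two remarks on the softer parts. Your aside that $N=\overline{\{e\}}$ is false in general: one has $N=\{x:e\in\overline{\{x\}}\}=(\overline{\{e\}})^{-1}$, and for $(\R,+)$ with base $\{[x,\infty):x\in\R\}$ one gets $N=[0,\infty)$ while $\overline{\{0\}}=(-\infty,0]$; this is harmless only because you never use it. The $T_0$ step is left as a gesture; it should be completed by noting that, by saturation, two cosets $xK\ne yK$ fail to be $T_0$-separated exactly when every open set containing $x$ contains $y$ and vice versa, which (translating by $x^{-1}$) means $x^{-1}y\in N\cap N^{-1}=K$, a contradiction.

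The genuine gap is the final claim that $\pi$ is a \emph{clopen} map, which in this paper (see how Lemma~\ref{CloCB} uses $\overline{\pi(U)}\subset\pi(\overline{U})$) means that $\pi$ is both open and \emph{closed}, i.e., carries closed sets to closed sets. Your sentence ``$G\setminus U=\pi^{-1}((G/K)\setminus\pi(U))$ shows $\pi(U)$ has open complement, so $\pi(U)$ is clopen'' fails twice: $G\setminus U$ is closed, not open, so this identity only re-proves that $\pi(U)$ is open; and even if each $\pi(U)$ were clopen, that would not make $\pi$ a closed map (nor is it true --- take $K=\{e\}$, $\pi=\mathrm{id}$). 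The missing argument is short: for closed $F\subset G$ let $U=G\setminus F$; since $U=\pi^{-1}\pi(U)=UK$ and $K=K^{-1}$, no product $fk$ with $f\in F$, $k\in K$ lies in $U$ (otherwise $f=(fk)k^{-1}\in UK=U$), hence $\pi^{-1}\pi(F)=FK\subset F$, so $\pi^{-1}\pi(F)=F$ is closed and therefore $\pi(F)$ is closed in the quotient topology. With that supplied, the proof is complete.
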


\begin{lemma}\label{TGiffT0} A semitopological group $G$ is a paratopological group iff
$T_0G$ is a paratopological group.
\end{lemma}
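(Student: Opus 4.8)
The plan is to exploit the two structural facts furnished by Lemma~\ref{TkaT0}: that the quotient homomorphism $\pi:G\to T_0G=G/K$ is clopen (hence open and continuous), and that every $\tau$-open set $U\subset G$ is saturated, $U=\pi^{-1}\pi(U)$. The latter says that the topology of $G$ is precisely the $\pi$-pullback of the topology of $T_0G$, and this is what makes joint continuity of multiplication pass freely between $G$ and $T_0G$. Since $G$ is assumed to be a semitopological group, in each direction it suffices to verify joint continuity of the multiplication of the relevant group.

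For the forward implication, suppose $G$ is a paratopological group, so its multiplication $\mu:G\times G\to G$ is continuous. As $\pi$ is an open surjection, so is $\pi\times\pi:G\times G\to T_0G\times T_0G$, and every open continuous surjection is a quotient map. Writing $\bar\mu$ for the multiplication of $T_0G$, the homomorphism property gives $\pi\circ\mu=\bar\mu\circ(\pi\times\pi)$; since $\pi\circ\mu$ is continuous and $\pi\times\pi$ is a quotient map, $\bar\mu$ is continuous. Thus $T_0G$ is a paratopological group.

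For the reverse implication, assume $T_0G$ is a paratopological group and fix $(a,b)\in G\times G$ together with an open neighborhood $W$ of $ab$. Since $\pi$ is open, $\pi(W)$ is an open neighborhood of $\pi(a)\pi(b)=\pi(ab)$ in $T_0G$, and continuity of $\bar\mu$ at $(\pi(a),\pi(b))$ yields open sets $\bar V_a\ni\pi(a)$ and $\bar V_b\ni\pi(b)$ with $\bar V_a\bar V_b\subset\pi(W)$. Setting $V_a=\pi^{-1}(\bar V_a)$ and $V_b=\pi^{-1}(\bar V_b)$, which are open neighborhoods of $a$ and $b$ by continuity of $\pi$, any $x\in V_a$ and $y\in V_b$ satisfy $\pi(xy)=\pi(x)\pi(y)\in\bar V_a\bar V_b\subset\pi(W)$, so $xy\in\pi^{-1}\pi(W)=W$ by the saturation property. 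Hence $V_aV_b\subset W$ and $\mu$ is continuous at $(a,b)$; as $(a,b)$ was arbitrary, $G$ is a paratopological group.

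I expect the reverse direction to be the crux. The essential point there is that open sets of $G$ be saturated, so that the membership $xy\in\pi^{-1}\pi(W)$ forces $xy\in W$; without the identity $W=\pi^{-1}\pi(W)$ from Lemma~\ref{TkaT0} one could not lift the neighborhoods back from $T_0G$ to $G$, and the argument would collapse. The forward direction is, by contrast, the standard fact that an open quotient of a paratopological group is a paratopological group.
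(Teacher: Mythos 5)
Your proof is correct and takes essentially the same route as the paper, whose entire proof is the remark that one direction is evident and the other follows from Lemma~\ref{TkaT0}. You have simply made explicit the two ingredients the paper leaves implicit: openness of $\pi$ (so that $\pi\times\pi$ is a quotient map) for passing from $G$ to $T_0G$, and the saturation identity $W=\pi^{-1}\pi(W)$ for lifting neighborhoods back from $T_0G$ to $G$.
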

\begin{proof} The sufficiency is evident. The necessity follows from Lemma~\ref{TkaT0}.
\end{proof}

\begin{lemma}\label{CloCB} Let $(X,\tau)$ be a weakly semiregular space, $(Y,\sigma)$
be a space and $\pi:X\to Y$ be a continuous clopen surjection. Then $Y$ is a weakly semiregular space.
\end{lemma}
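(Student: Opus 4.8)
The plan is to show that $Y$ has a base consisting of regular open sets, which is precisely the definition of weak semiregularity. Since $X$ is weakly semiregular, $X$ already has such a base, and the natural strategy is to transport this base to $Y$ via the map $\pi$. Because $\pi$ is clopen (sends open sets to open sets and closed sets to closed sets) and continuous, it should preserve the operations of closure and interior well enough to carry regular open sets to regular open sets. So first I would fix an arbitrary point $y\in Y$ and an open neighborhood $W\subset Y$ of $y$, and aim to produce a regular open set $V$ with $y\in V\subset W$.

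Next, using surjectivity, I would pick $x\in X$ with $\pi(x)=y$, and observe that $\pi^{-1}(W)$ is an open neighborhood of $x$ by continuity. Since $X$ is weakly semiregular, there is a regular open set $U\subset X$ with $x\in U\subset\pi^{-1}(W)$, that is, $U=\operatorname{int}\ol U$. I would then set $V=\pi(U)$, which is open because $\pi$ is clopen, and which satisfies $y\in V\subset W$. The remaining task is to verify that $V=\pi(U)$ is regular open in $Y$, i.e.\ that $\pi(U)=\operatorname{int}\ol{\pi(U)}$.

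The heart of the argument is the identity $\ol{\pi(U)}=\pi(\ol U)$. The inclusion $\pi(\ol U)\subset\ol{\pi(U)}$ is just continuity of $\pi$ applied to the closure, while $\pi(\ol U)$ is closed (as $\pi$ is clopen and $\ol U$ is closed) and contains $\pi(U)$, giving the reverse inclusion $\ol{\pi(U)}\subset\pi(\ol U)$; hence equality. Similarly, because $\pi$ is open, one expects $\operatorname{int}\pi(\ol U)\supset\pi(\operatorname{int}\ol U)=\pi(U)$, and the clopenness should let me pin down $\operatorname{int}\ol{\pi(U)}$ exactly. Combining these, $\operatorname{int}\ol{\pi(U)}=\operatorname{int}\,\pi(\ol U)\supset\pi(\operatorname{int}\ol U)=\pi(U)$, and since always $\pi(U)\subset\operatorname{int}\ol{\pi(U)}$ for open $\pi(U)$, I would conclude $\pi(U)=\operatorname{int}\ol{\pi(U)}$, so $V$ is regular open.

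The main obstacle I anticipate is the verification that $\operatorname{int}\ol{\pi(U)}\subset\pi(U)$, which is the nontrivial half of showing $V$ is regular open. The inclusion $\pi(U)\subset\operatorname{int}\ol{\pi(U)}$ is automatic for any open set, but for the reverse one must genuinely exploit both that $\pi$ is open (to relate interiors) and that $\pi$ is closed (to relate closures), together with the hypothesis $U=\operatorname{int}\ol U$. Care is needed because images under $\pi$ need not commute with interiors in general; it is the combination of openness and closedness — i.e.\ $\pi$ being clopen — that makes the argument work, so I would be careful to use both properties at the right points rather than assuming a single blanket commutation of $\pi$ with closure and interior.
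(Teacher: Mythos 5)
Your setup (pull back $W$, choose a regular open $U\ni x$ inside $\pi^{-1}(W)$, push forward) matches the paper's, and your identity $\overline{\pi(U)}=\pi(\overline{U})$ is correct and is exactly the use the paper makes of clopenness. But the step you yourself flag as the remaining obstacle --- that $\operatorname{int}\overline{\pi(U)}\subset\pi(U)$, i.e.\ that $\pi(U)$ is itself regular open --- is not just unproved, it is false. Take $X=\mathbb R\times\{0,1\}$ (two disjoint copies of the real line), $Y=\mathbb R$, and $\pi(t,i)=t$. This $\pi$ is a continuous clopen surjection and $X$ is metrizable, hence weakly semiregular. The set $U=\bigl((0,1)\times\{0\}\bigr)\cup\bigl((1,2)\times\{1\}\bigr)$ is regular open in $X$, yet $\pi(U)=(0,1)\cup(1,2)$ has $\operatorname{int}\overline{\pi(U)}=(0,2)\ne\pi(U)$. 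So no amount of care with openness and closedness will deliver the commutation you are hoping for; the approach of exhibiting $\pi(U)$ as the required basic regular open set cannot be completed.

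The repair is small and is what the paper does: do not use $\pi(U)$ itself, but $V'=\operatorname{int}\overline{\pi(U)}$. For any open set $G$ one has $\overline{\operatorname{int}\overline{G}}=\overline{G}$, so $\operatorname{int}\overline{G}$ is automatically regular open; hence $V'$ is regular open with no further argument. It contains $y$ because $\pi(U)$ is open (here you use that $\pi$ is open) and $\pi(U)\subset\operatorname{int}\overline{\pi(U)}$. And it sits inside $W$ because $V'\subset\overline{\pi(U)}\subset\pi(\overline{U})\subset\pi\pi^{-1}(W)=W$, where the middle inclusion uses that $\pi$ is closed. In my example above this is exactly what happens: $(0,2)$ is regular open and still lies inside whatever neighborhood you started from, even though it is strictly larger than $\pi(U)$. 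With this substitution the rest of your argument goes through verbatim.
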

\begin{proof} Let $y\in Y$ be any point and $V\in\sigma$ be any open neighborhood of $y$.
Pick a point $x\in \pi^{-1}(y)$. Since $\pi^{-1}(V)$ is a neighborhood of $x$ and
$X$ is a weakly semiregular space, there exists a regular open neighborhood $U$ of the point $x$,
contained in a set $\pi^{-1}(V)$. Then $y=\pi(x)\in \pi(U)\subset \overline{\pi(U)}\subset
\pi(\overline{U})\subset\pi\pi^{-1}(V)=V$ (the third inclusion here holds because the map $\pi$ is
closed). Therefore a canonical open set $V'=\operatorname{int}\overline{\pi(U)}$ is closed and
$y\in V'\subset \overline{\pi(U)}\subset V$.
\end{proof}

\begin{lemma}\label{KernelLemma} Let $(G,\tau)$ be a weakly semiregular semitopological group.
Put $N=\bigcap\{U:e\in U\in\tau\}$. Then $N$ is a closed normal subgroup of the group $G$ and
$$N=\bigcap\{\overline{U}:e\in U\in\tau\}=\bigcap\{UU^{-1}:e\in U\in\tau\}=
\bigcap\{U^{-1}:e\in U\in\tau\}.$$
\end{lemma}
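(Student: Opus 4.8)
The plan is to reduce all four equalities to the single inclusion $\bigcap\{\overline U:e\in U\in\tau\}\subseteq N$; once that is established, everything else follows formally. First I would record the facts that hold in any semitopological group. Since left translations, right translations and inner automorphisms $c_w\colon y\mapsto w^{-1}yw$ are homeomorphisms, $\{xV:e\in V\in\tau\}$ is a neighbourhood base at each point $x$, and this gives $\overline U=\bigcap\{UV^{-1}:e\in V\in\tau\}$ for every open $U\ni e$; intersecting over $U$ and using the substitution $W=U\cap V$ then yields $\bigcap\overline U=\bigcap UU^{-1}$, a visibly symmetric set. Likewise $\bigcap U^{-1}=N^{-1}$ and $N\subseteq\bigcap\overline U$ are immediate. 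Writing $A=\bigcap\{\overline U:e\in U\in\tau\}$, the key preliminary observation is the conjugation-invariance $w^{-1}Aw=A$ for all $w\in G$: each $c_w$ is a homeomorphism fixing $e$ and permuting the open neighbourhoods of $e$, so $c_w(\overline U)=\overline{c_w(U)}$ and $c_w$ merely reindexes the intersection defining $A$.

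The heart of the argument, and the step where weak semiregularity is essential, is the claim $A\subseteq N$. As the regular open neighbourhoods of $e$ form a base, it suffices to show that $x\in A$ implies $x^{-1}\in U_0$ for every regular open $U_0\ni e$, and I would prove the stronger statement $x^{-1}U_0\subseteq\overline{U_0}$. Fix $w\in U_0$; I must verify $x^{-1}w\in\overline{U_0}$, i.e. that every basic neighbourhood $(x^{-1}w)V$ of $x^{-1}w$ meets $U_0$. This is where conjugation-invariance does the work: $w^{-1}xw\in A\subseteq\overline V$, so there is a net $(v_j)$ in $V$ with $v_j\to w^{-1}xw$. Applying the continuous left translation $L_{x^{-1}w}$ gives $(x^{-1}w)v_j\to(x^{-1}w)(w^{-1}xw)=w$, and since $w\in U_0$ and $U_0$ is open, the points $(x^{-1}w)v_j$, which lie in $(x^{-1}w)V$, eventually enter $U_0$. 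Hence $(x^{-1}w)V\cap U_0\neq\emptyset$, so $x^{-1}w\in\overline{U_0}$ and therefore $x^{-1}U_0\subseteq\overline{U_0}$. Now $x^{-1}U_0$ is an open neighbourhood of $x^{-1}$ contained in $\overline{U_0}$, so regular openness gives $x^{-1}\in\int\overline{U_0}=U_0$. Letting $U_0$ range over the base yields $x^{-1}\in N$, and applying this to $x^{-1}\in A$ (legitimate since $A$ is symmetric) gives $x\in N$, so $A\subseteq N$.

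It then remains to assemble the statement. Combining $A\subseteq N$ with the trivial $N\subseteq A$ gives $N=\bigcap\overline U=\bigcap UU^{-1}$; as this common value is symmetric we get $N=N^{-1}$, whence $\bigcap U^{-1}=N^{-1}=N$. The set $N=\bigcap\overline U$ is an intersection of closed sets, so it is closed (equivalently $N=\overline{\{e\}}$). Finally, since $N=N^{-1}$, the normal subgroup $K=N\cap N^{-1}$ furnished by Lemma~\ref{TkaT0} coincides with $N$, so $N$ is a closed normal subgroup, as required.

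I expect the main obstacle to be exactly the net argument that establishes $x^{-1}U_0\subseteq\overline{U_0}$. The naive attempt to use $w$ itself as a witness for $x^{-1}w\in\overline{U_0}$ fails, because $w^{-1}xw$ is only known to lie in $\overline V$ rather than in $V$ (this is precisely the gap between $A$ and $N$, and the point where joint continuity is unavailable). The resolution is to approximate $w^{-1}xw$ from inside $V$ and transport the approximation by the continuous translation $L_{x^{-1}w}$, after which regular openness of $U_0$ upgrades membership in $\overline{U_0}$ to membership in $U_0$. Isolating the conjugation-invariance of $A$ beforehand is what makes this transport available.
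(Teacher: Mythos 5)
Your proof is correct, and its skeleton is the same as the paper's: both arguments reduce everything to the single inclusion $A=\bigcap\{\overline U:e\in U\in\tau\}\subseteq N$, obtain $A=\bigcap\{UU^{-1}:e\in U\in\tau\}$ from the identity $\overline V=\bigcap\{VU^{-1}:e\in U\in\tau\}$, and then read off $N=N^{-1}$ and the remaining equalities. The difference is in how the key inclusion is executed. The paper works from the right: for $x\in U$ it picks $V$ with $xV\subseteq U$ and gets $xA\subseteq x\overline V=\overline{xV}\subseteq\overline U$, hence $UA\subseteq\overline U$ with $UA$ open and containing $A$, so $A\subseteq\operatorname{int}\overline U$ for every open $U\ni e$; weak semiregularity then gives $A\subseteq\bigcap\operatorname{int}\overline U=N$ with no conjugation and no inversion needed. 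You work from the left, proving $x^{-1}U_0\subseteq\overline{U_0}$ for regular open $U_0$, which forces you first to establish the conjugation invariance of $A$ (to move $x$ past $w$) and then to pass through $x^{-1}\in N$ together with the symmetry of $A$; this is correct, and your net argument (equivalently: $(w^{-1}x)U_0$ is an open neighborhood of $w^{-1}xw\in\overline V$, hence meets $V$) is sound, but it costs an auxiliary observation that the right-handed version avoids. For normality you invoke Lemma~\ref{TkaT0} and the equality $K=N\cap N^{-1}=N$, a legitimate shortcut; the paper instead verifies directly that $N$ is a subsemigroup ($xy\in xV\subseteq U$) and conjugation-stable ($g^{-1}Ng\subseteq g^{-1}Vg\subseteq U$), which keeps the lemma self-contained. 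Both routes are valid.
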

\begin{proof} Put $N'=\bigcap\{\overline{U}:e\in U\in\tau\}$ and
$N''=\bigcap\{UU^{-1}:e\in U\in\tau\}$. The set $N'$ is a closed subset of the group $G$.
Since for any $V\subset G$, $\overline{V}=\bigcap\{VU^{-1}:e\in U\in\tau\}$, we have $N'=N''$.
Moreover, it is easy to see that $N^{-1}=\bigcap\{U^{-1}:e\in U\in\tau\}$, $N\subset N'$,
$N\subset N''$ and $N^{-1}\subset N''$.
Let $U\in\tau$ be any open neighborhood of the unit of the group $G$
and $x$ be any element of the set $U$. There exists an
open neighborhood $V\in\tau$ the of unit of the group $G$ such that
$xV\subset U$. Then $xN'\subset x\overline{V}\subset\overline{U}$.
Since this inclusion holds for an arbitrary element $x$ of the set $U$,
we see that $UN'\subset\overline{U}$. But $UN'$ is an open subset of a group $G$
and hence $N'\subset UN'\subset\operatorname{int}\overline{U}$. Then
$N'\subset \bigcap\{\operatorname{int}\overline{U}:e\in U\in\tau\}=
\bigcap\{U:e\in U\in\tau\}=N$ (the first equality holds because $G$ is a weakly semiregular space).
At last, since $N^{-1}\subset N''=N'\subset N$, we have the inclusion $N\subset N^{-1}$.

Let $x,y$ be arbitrary elements of $N$ and $U\in\tau$
be an arbitrary open neighborhood of the unit of the group $G$. Then $x\in N\subset U$.
There exists an open
neighborhood $V\in\tau$ of the unit of the group $G$ such that $xV\subset U$.
Then $y\in N\subset V$. Hence $xy\in xV\subset U$.
Since this holds for
an arbitrary open neighborhood $U\in\tau$ of the unit of the group $G$,
$xy\in \bigcap\{U:e\in U\in\tau\}=N$. So $N$ is a subsemigroup of the group $G$.
Since $N=N^{-1}$, $N$ is a group.

Let $g$ be an arbitrary element of the group $G$,
and $U\in\tau$ be an arbitrary open neighborhood of the unit of the group $G$.
There exists an open
neighborhood $V\in\tau$ of the unit of the group $G$ such that $g^{-1}Vg\subset U$.
Then $g^{-1}Ng\subset g^{-1}Vg\subset U$. Since this holds for
an arbitrary open neighborhood $U\in\tau$ of the unit of the group $G$,
$g^{-1}Ng\subset \bigcap\{U:e\in U\in\tau\}=N$. So $N$ is a normal subsemigroup of the group $G$.
\end{proof}

\begin{proposition}\label{T0toT2} Each $T_0$ weakly semiregular semitopological group $(G,\tau)$ is semiregular.
\end{proposition}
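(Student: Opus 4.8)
The plan is to reduce everything to showing that the set $N=\bigcap\{U:e\in U\in\tau\}$ is trivial, after which weak semiregularity will upgrade $T_0$ all the way to $T_2$. Since $G$ is already weakly semiregular by hypothesis, establishing the Hausdorff property is exactly what remains to conclude that $G$ is semiregular.

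First I would invoke Lemma~\ref{KernelLemma}: because $(G,\tau)$ is weakly semiregular, $N$ is a closed normal subgroup satisfying $N=N^{-1}$ and, crucially, $N=\bigcap\{\overline{U}:e\in U\in\tau\}$. In the notation of Lemma~\ref{TkaT0} the equality $N=N^{-1}$ says precisely that $K=N\cap N^{-1}=N$. Next I would use the $T_0$ hypothesis to collapse $K$. By Lemma~\ref{TkaT0} every open $U$ satisfies $U=\pi^{-1}\pi(U)$, where $\pi:G\to G/K$ is the quotient homomorphism; equivalently, any two points in a common coset of $K$ lie in exactly the same open sets, i.e. they are topologically indistinguishable. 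For $k\in K$ the points $e$ and $k$ share a coset, so the $T_0$ axiom forces $k=e$; hence $K=\{e\}$ and therefore $N=\{e\}$.

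The finishing step is to deduce Hausdorffness from $N=\bigcap\{\overline{U}:e\in U\in\tau\}=\{e\}$. Since left translations are homeomorphisms of $(G,\tau)$, it suffices to separate $e$ from an arbitrary $g\neq e$ by disjoint open sets; an arbitrary pair $x\neq y$ is then handled by translating by $x^{-1}$, separating $e$ from $x^{-1}y$, and translating back by $x$. As $g\notin\bigcap\{\overline{U}:e\in U\in\tau\}$, there is an open neighbourhood $U$ of $e$ with $g\notin\overline{U}$; then $U$ and $G\setminus\overline{U}$ are disjoint open neighbourhoods of $e$ and $g$. Thus $(G,\tau)$ is $T_2$, and being weakly semiregular and $T_2$ it is semiregular by definition.

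I expect the main obstacle to be the identification $N=\{e\}$, which forces the two hypotheses to be combined in the correct order: weak semiregularity supplies $N=N^{-1}$ (so that $K=N$), while $T_0$ collapses $K$ to $\{e\}$, and neither hypothesis alone suffices. It is precisely the equality $N=\bigcap\{\overline{U}:e\in U\in\tau\}$ from Lemma~\ref{KernelLemma} that converts the triviality of $N$ into genuine Hausdorff separation rather than merely the $T_1$ property one would get from $\bigcap\{U:e\in U\in\tau\}=\{e\}$ alone.
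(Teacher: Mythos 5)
Your proof is correct and takes essentially the same route as the paper: both arguments use Lemma~\ref{KernelLemma} to obtain $N=N^{-1}$ (so that $K=N\cap N^{-1}=N$), apply the $T_0$ hypothesis to collapse $N$ to $\{e\}$, and then read off Hausdorffness from one of the lemma's equivalent descriptions of $N$. The only cosmetic difference is the last step, where the paper concludes $T_2$ from $\bigcap\{UU^{-1}:e\in U\in\tau\}=\{e\}$ while you use $\bigcap\{\overline{U}:e\in U\in\tau\}=\{e\}$ together with an explicit translation argument.
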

\begin{proof} Put $N=\bigcap\{U:e\in U\in\tau\}$. Since $G$ is a $T_0$ space, $N\cap N^{-1}=\{e\}$.
But by Lemma~\ref{KernelLemma}, $N^{-1}=N=\bigcap\{UU^{-1}:e\in U\in\tau\}=N''$. Therefore $N''=\{e\}$
and the group $G$ is $T_2$.
\end{proof}

Lemma~\ref{TkaT0}, Lemma~\ref{CloCB} and Proposition~\ref{T0toT2} imply the following

\begin{proposition}\label{T0CB} If $G$ is a weakly semiregular semitopological group then $T_0G$ is a semiregular semitopological group.
\end{proposition}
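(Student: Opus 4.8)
The plan is to combine the three preceding results in the order in which they naturally chain together, using $T_0G$ as the bridge between the weakly semiregular hypothesis on $G$ and the semiregularity conclusion.

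First I would invoke Lemma~\ref{TkaT0} with $N=\bigcap\{U:e\in U\in\tau\}$ and $K=N\cap N^{-1}$. This supplies three facts at once: that $K$ is a normal subgroup of $G$, that $T_0G=G/K$ is a $T_0$ semitopological group, and---crucially for the next step---that the quotient homomorphism $\pi\colon G\to G/K$ is clopen. Being a quotient map, $\pi$ is automatically continuous and surjective as well.

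Second, I would feed this into Lemma~\ref{CloCB}, taking $X=G$ (which is weakly semiregular by hypothesis), $Y=T_0G$, and $\pi$ the continuous clopen surjection just produced. Its conclusion is exactly that $T_0G$ is weakly semiregular. At this point $T_0G$ is a semitopological group (again from Lemma~\ref{TkaT0}) that is simultaneously $T_0$ and weakly semiregular, so the final step is a direct application of Proposition~\ref{T0toT2}, which asserts that every $T_0$ weakly semiregular semitopological group is semiregular. Hence $T_0G$ is semiregular, as required.

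Since the argument is essentially a bookkeeping assembly of three already-established results, I do not expect a genuine obstacle. The only point demanding care is the matching of hypotheses across the applications: one must check that the clopenness and surjectivity of $\pi$ furnished by Lemma~\ref{TkaT0} are precisely the properties that Lemma~\ref{CloCB} requires of its map, and that the semitopological group structure on $T_0G$ (rather than merely its topology) is genuinely available, so that Proposition~\ref{T0toT2} is applicable to it and not just to an abstract $T_0$ weakly semiregular space.
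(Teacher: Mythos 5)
Your proposal is correct and follows exactly the paper's own argument: the paper states that Lemma~\ref{TkaT0}, Lemma~\ref{CloCB} and Proposition~\ref{T0toT2} together imply the proposition, which is precisely the three-step chain you describe. No differences to report.
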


We remark that Proposition~\ref{T0CB} cannot be generalized for arbitrary quotient groups even
of regular paratopological groups, because in~\cite{BanRav} Taras Banakh and the author
constructed a countable regular abelian paratopological group $G$
containing a closed discrete subgroup $H$ such that the quotient $G/H$ is $T_2$ but not $T_3$.
The group $G/H$ is even not weakly semiregular, because by~\cite[Proposition 1.5]{Rav2} each weakly semiregular paratopological group is $T_3$.

\begin{lemma}~\label{CancComp}\cite[Theorem 0.5]{Rez} A $T_2$ compact semigroup with separately continouous
multiplication and two-sides cancellations is a topological group.
\end{lemma}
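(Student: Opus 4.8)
The plan is to first establish that the underlying semigroup, call it $S$, is \emph{algebraically} a group, and only then to upgrade the separate continuity of the multiplication to joint continuity together with continuity of the inversion. For the algebraic part I would start from the Ellis--Numakura lemma: every nonempty compact Hausdorff semigroup in which the one-sided translations are continuous contains an idempotent. Since our multiplication is separately continuous, this applies and yields an $e\in S$ with $e^2=e$. The key observation is that the left translation $L_e\colon x\mapsto ex$ is injective (by left cancellation) and satisfies $L_e\circ L_e=L_{e^2}=L_e$; an injective idempotent self-map of a set is the identity, since $L_e(L_e(x))=L_e(x)$ forces $L_e(x)=x$. Hence $ex=x$ for all $x$, and the same argument applied to the right translation $R_e$ gives $xe=x$. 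Thus $e$ is a two-sided unit, $S$ is a monoid, and $e$ is its unique idempotent.

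To produce inverses, I would fix $a\in S$ and pass to the closure $C=\overline{\{a^n:n\ge 1\}}$ of the cyclic subsemigroup it generates. As the closure of a subsemigroup in a semitopological semigroup, $C$ is again a (compact) subsemigroup, so by Ellis--Numakura it contains an idempotent; since $e$ is the only idempotent of $S$, we get $e\in C$. If $e=a^n$ for some $n$ we are done; otherwise $e$ is a genuine limit point, so there is a net $a^{n_\beta}\to e$ with $n_\beta\ge 2$. Passing, by compactness, to a subnet along which $a^{n_\beta-1}$ converges to some $b\in S$, separate continuity gives $ab=\lim a\cdot a^{n_\beta-1}=\lim a^{n_\beta}=e$, and symmetrically $ba=e$. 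Hence every element is invertible and $S$ is a group. Moreover each translation $L_a$, being a continuous bijection (with continuous inverse $L_{a^{-1}}$) from a compact space onto a Hausdorff space, is a homeomorphism, so the group is topologically homogeneous.

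The remaining, and genuinely hard, step is to show that this compact Hausdorff \emph{semitopological} group is in fact a topological group, i.e.\ that separate continuity of the multiplication forces joint continuity and continuity of the inversion. This is exactly the content of Ellis's theorem that every locally compact Hausdorff semitopological group is a topological group (a compact Hausdorff space being locally compact), and I would invoke it here; this is the main obstacle, the algebraic part above being comparatively routine.

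If instead one wants a more self-contained route, I would argue as follows. Namioka's separate-versus-joint continuity theorem, applied to the multiplication $m\colon S\times S\to S$ on the Baire, compact Hausdorff space $S$, furnishes at least one point of joint continuity of $m$; homogeneity (all translations are homeomorphisms) then transports joint continuity to every point, so $S$ becomes a compact Hausdorff paratopological group. Finally, with $m$ now jointly continuous, the graph of the inversion map equals $m^{-1}(e)=\{(x,y):xy=e\}$, which is closed in the compact Hausdorff space $S\times S$; hence, by the closed-graph criterion for maps into a compact Hausdorff space, the inversion is continuous and $S$ is a topological group. (Equivalently, a compact Hausdorff paratopological group is $T_3$ and pseudocompact, so one may conclude via the known fact that each pseudocompact paratopological group is a topological group.) In either presentation, the passage from separate to joint continuity is where the real work lies.
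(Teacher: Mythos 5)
Your proposal is essentially correct, but note that the paper does not prove this lemma at all: it is imported as a black box from Reznichenko's paper (Theorem~0.5 of~\cite{Rez}), whose proof goes through $C_p$-theory and Grothendieck-type double-limit arguments for pseudocompact spaces. What you supply instead is a genuine, more classical argument: Ellis--Numakura produces an idempotent $e$, two-sided cancellation forces $L_e$ and $R_e$ to be injective idempotent self-maps and hence the identity, the closure of each cyclic subsemigroup is a compact subsemigroup whose idempotent must be $e$, and a convergent (sub)net $a^{n_\beta}\to e$ together with separate continuity yields a two-sided inverse. This reduction of the lemma to Ellis's theorem on locally compact Hausdorff semitopological groups is sound and self-contained modulo that one deep input; it is a legitimately different route from the one the paper relies on, and arguably the more standard one. (The final closed-graph step for the inversion, or equivalently the paper's own Lemma on compact paratopological groups, correctly finishes the job once joint continuity is in hand.)

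One caveat on your ``more self-contained'' alternative: Namioka's separate-versus-joint continuity theorem requires the range space to be (pseudo)metrizable, whereas here the range is the compact Hausdorff space $S$ itself, which need not be metrizable. Composing with functions in $C(S)$ does not repair this, since one would have to intersect uncountably many dense $G_\delta$ sets. So that variant, as stated, has a gap; one needs either the generalizations of Namioka's theorem to compact-valued maps (Bouziad, Christensen) or simply Ellis's theorem itself, which is what your primary route does. Since you present the Namioka argument only as an optional alternative, the proof as a whole stands on the first route.
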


\begin{lemma}\label{CompTG}{\em (}see~\cite[Lemma 5.4]{Rav3}, \cite[Proposition 3.2]{Tka2},
or~\cite[Proposition 1]{Rav4}{\em )} Each compact paratopological group is a topological group.
\end{lemma}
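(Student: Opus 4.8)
The plan is to reduce to the Hausdorff case and then invoke the cancellation theorem of Lemma~\ref{CancComp}. In a paratopological group the multiplication is jointly, hence separately, continuous, and the group axioms supply two-sided cancellation; so as soon as the underlying space is compact and $T_2$, Lemma~\ref{CancComp} applies verbatim and shows that $G$ is a topological group. The whole task is therefore to dispose of the separation hypothesis.

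To reach the $T_2$ setting I would pass to the $T_0$-reflection. Put $N=\bigcap\{U:e\in U\in\tau\}$, $K=N\cap N^{-1}$, and form $T_0G=G/K$ as in Lemma~\ref{TkaT0}. The quotient map $\pi$ is a continuous surjection, so $T_0G$ is compact, and by Lemma~\ref{TGiffT0} it is again a paratopological group. Crucially, $\pi$ is clopen and every open set is $\pi$-saturated, that is $U=\pi^{-1}\pi(U)$; these two facts are exactly what is needed to carry continuity of the inversion back from $T_0G$ to $G$, since for a saturated open set $W\ni x^{-1}$ one lifts a suitable neighbourhood of $\pi(x)$ through $\pi$ and uses normality of $K$ to produce a saturated neighbourhood $V\ni x$ with $V^{-1}\subseteq W$. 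Hence it suffices to prove the statement for a compact $T_0$ paratopological group.

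The heart of the matter is the automatic-separation claim that a compact $T_0$ paratopological group is Hausdorff; equivalently, writing $S=\overline{\{e\}}=N^{-1}$, that $N=N^{-1}$, so that $S$ is a subgroup. Indeed $S$ is a closed subsemigroup, because joint continuity gives $\overline{\{e\}}\,\overline{\{e\}}\subseteq\overline{\{e\}}$, and it is cancellative as a subset of the group, so it is a compact cancellative semigroup. The natural attack is to produce inverses inside $S$ by compact-semigroup idempotent techniques, the only available idempotent being $e$; once $S$ is known to be a subgroup, all of its points share the closure $S$ and so are pairwise topologically indistinguishable, whence the $T_0$ axiom forces $S=\{e\}$ and $T_0G$ is $T_1$. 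I expect this separation step to be the main obstacle: since $N$ is an intersection of open sets, limits of elements of $N$ escape into $\overline{N}$, and controlling them is precisely where the compact-semigroup machinery behind Lemma~\ref{CancComp}, rather than a naive net argument, is needed.

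Knowing that $\{e\}$ is closed finishes the argument. The graph of the inversion equals $m^{-1}(\{e\})$, which is closed because $m$ is continuous; as the codomain is compact, the closed-graph theorem makes the inversion continuous, so $T_0G$ is a topological group (and its separation then improves automatically, a $T_0$ topological group being completely regular). This is also the point at which Lemma~\ref{CancComp} can be applied, once Hausdorffness is in hand. Finally, the transport of continuity described in the second paragraph turns a topological-group structure on $T_0G$ into one on $G$, completing the proof; the reduction to $T_0G$ and this last lifting are routine given Lemmas~\ref{TkaT0} and~\ref{TGiffT0}, and the separation step of the third paragraph is the only genuinely hard part.
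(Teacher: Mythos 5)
The paper does not actually prove this lemma --- it only cites \cite[Lemma 5.4]{Rav3}, \cite[Proposition 3.2]{Tka2} and \cite[Proposition 1]{Rav4} --- so there is no internal proof to measure you against; your attempt has to stand on its own. Two of your three steps do stand: the reduction to $T_0G$ via Lemma~\ref{TkaT0} (with the saturation $U=\pi^{-1}\pi(U)$ used to pull continuity of the inversion back to $G$) is correct, and the endgame is correct and rather elegant --- if $\{e\}$ is closed, then the graph of the inversion is $m^{-1}(\{e\})$, closed by joint continuity, and since the projection of $G\times G$ onto the first factor along a compact factor is a closed map (no Hausdorffness needed), the inversion is continuous. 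That argument even bypasses Lemma~\ref{CancComp} entirely in the $T_1$ case.

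The genuine gap is the middle step, which you yourself flag as ``the main obstacle'' and only ``expect'' to handle: the claim that a compact $T_0$ paratopological group is $T_1$, equivalently that $S=\overline{\{e\}}=N^{-1}$ is a subgroup, equivalently that $N^{-1}\subseteq N$ where $N=\bigcap\{U:e\in U\in\tau\}$. Nothing you write establishes this, and the tool you propose to use is not available: $S$ is a compact cancellative subsemigroup with identity, but every nonempty relatively open subset of $S$ contains $e$, so $S$ is irreducible (hyperconnected) --- as far from Hausdorff as a space can be. Both Lemma~\ref{CancComp} and the Ellis--Numakura idempotent machinery assume $T_2$ in an essential way (e.g.\ one needs compact subsets such as $Ts$ to be closed), so neither ``applies verbatim'' to $S$. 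The naive attacks all stall at the same point: compactness yields a cluster point $z$ of the powers of an $x\in S$, from which one extracts relations of the form $x^{k}\in V^{-1}V$, and the uncontrolled $V^{-1}$ is precisely the continuity of the inversion one is trying to prove. This separation step is the entire content of the lemma (note the cofinite topology on $\Z$ gives a compact $T_1$ \emph{semi}topological group that is not topological, so something specific to joint continuity plus compactness must be exploited), and it needs the argument from the cited sources, not a gesture. As written, the proposal is an incomplete proof.
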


\begin{proposition}\label{pr:3} Each weakly semiregular compact semitopological group $G$ is a topological group.
\end{proposition}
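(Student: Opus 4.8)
The plan is to reduce the problem to the $T_0$ case and then chain together Reznichenko's cancellation theorem with the fact that compact paratopological groups are topological.

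First I would pass to the $T_0$ reflection $T_0G=G/K$, where $K=N\cap N^{-1}$ and $N=\bigcap\{U:e\in U\in\tau\}$, as in Lemma~\ref{TkaT0}. By Lemma~\ref{KernelLemma}, for a weakly semiregular semitopological group one has $N=N^{-1}$, so $K=N$ is a closed normal subgroup and the quotient homomorphism $\pi:G\to T_0G$ is a continuous clopen surjection. Since $G$ is compact and $\pi$ is a continuous surjection, $T_0G$ is compact. Moreover, by Proposition~\ref{T0CB} the group $T_0G$ is a semiregular semitopological group; in particular it is $T_2$.

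Next I would observe that $T_0G$, being a group with separately continuous multiplication, is precisely a $T_2$ compact semigroup with separately continuous multiplication and two-sided cancellations (cancellation being automatic in any group). Reznichenko's Lemma~\ref{CancComp} then applies and shows that $T_0G$ is a topological group. In particular $T_0G$ is a paratopological group, so by Lemma~\ref{TGiffT0} the original group $G$ is a paratopological group as well. Finally, $G$ is then a compact paratopological group, and Lemma~\ref{CompTG} yields that $G$ is a topological group, completing the argument.

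The argument is essentially a matter of assembling the right ingredients, so I do not expect a serious obstacle. The one point that requires care is verifying that the $T_0$ reflection inherits both compactness and the Hausdorff ($T_2$) property; these are supplied respectively by the fact that $\pi$ is a continuous surjection of the compact space $G$ and by Proposition~\ref{T0CB}. The crucial conceptual step is that weak semiregularity, through Proposition~\ref{T0CB}, upgrades the $T_0$ quotient to a Hausdorff (indeed semiregular) space, which is exactly the separation hypothesis needed to invoke Reznichenko's theorem; without it one could not pass from a compact cancellative semigroup to a topological group.
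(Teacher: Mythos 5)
Your proof is correct and follows essentially the same route as the paper's own argument: pass to the $T_0$ reflection, use Proposition~\ref{T0CB} to get a semiregular (hence $T_2$) compact semitopological group, apply Reznichenko's Lemma~\ref{CancComp}, then transfer paratopologicity back via Lemma~\ref{TGiffT0} and finish with Lemma~\ref{CompTG}. The extra details you supply (compactness of the quotient as a continuous image, $K=N$ via Lemma~\ref{KernelLemma}) are accurate and merely make explicit what the paper leaves implicit.
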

\begin{proof} By Proposition~\ref{T0CB}, $T_0G$ is a semiregular compact semitopological group.
By Lemma~\ref{CancComp}, $T_0G$ is a topological group. By Lemma~\ref{TGiffT0}, $G$ is
a paratopological group. By Lemma~\ref{CompTG}, $G$ is a topological group.
\end{proof}

Let's illustrate the topic by the following simple

\begin{proposition} Let $G$ be a group endowed with the cofinite topology, that is a set $U\subset G$
is open in $G$ iff $U=\varnothing$ or a set $G\setminus U$ is finite.
Then $G$ is a $T_1$ semitopological group and the following conditions are equivalent.

1. The group $G$ is a paratopological group.

2.1. The group $G$ is $T_2$.

2.2. The group $G$ is weakly semiregular.

2.3. The group $G$ is quasiregular.

3. The group $G$ is finite.
\end{proposition}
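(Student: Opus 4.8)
The plan is to establish the two structural assertions (that $G$ is a $T_1$ semitopological group) together with the auxiliary fact that $(G,\tau)$ is compact, and then to prove the equivalence by showing $3\Rightarrow 1,2.1,2.2,2.3$ directly and each of $1,2.1,2.2,2.3\Rightarrow 3$ by contraposition, assuming $G$ infinite throughout the converse direction.

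First I would record the elementary properties of the cofinite topology. Given distinct $x,y$, the set $G\setminus\{y\}$ is open and contains $x$ but not $y$, so $G$ is $T_1$. Each left translation $L_a\colon x\mapsto ax$ satisfies $L_a^{-1}(G\setminus F)=G\setminus a^{-1}F$ for finite $F$, which is again cofinite; the same holds for right translations, so the multiplication is separately continuous and $G$ is a semitopological group. I would also note that $(G,\tau)$ is compact: from any open cover, one nonempty member has finite complement, which is covered by finitely many further members. This compactness is what lets me invoke the earlier lemmas.

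For $3\Rightarrow$ the rest: if $G$ is finite, the cofinite topology is discrete, so $G$ is a discrete, hence topological, group; this yields $1$, while a discrete space is $T_2$ and has every subset clopen (so regular open), yielding $2.1$ and $2.2$, and quasiregularity $2.3$ follows by taking $B=A$. For the converses I assume $G$ infinite and use two observations. First, any two nonempty open sets $G\setminus F_1$ and $G\setminus F_2$ meet, since their intersection is $G\setminus(F_1\cup F_2)\neq\varnothing$; hence no two points are separated and $G$ is not $T_2$, giving $2.1\Rightarrow 3$. Second, for every nonempty open $U=G\setminus F$ the only closed set containing the infinite set $U$ is $G$ itself, so $\overline{U}=G$ and thus $\operatorname{int}\overline{U}=G$. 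Consequently the only regular open sets are $\varnothing$ and $G$, which cannot form a base for the strictly finer cofinite topology (for instance, $G\setminus\{a\}$ is not a union of them), so $G$ is not weakly semiregular and $2.2\Rightarrow 3$; and no proper nonempty open $A$ can contain $\overline{B}=G$ for a nonempty open $B$, so $G$ is not quasiregular and $2.3\Rightarrow 3$.

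It remains to treat condition $1$, which I expect to be the main obstacle, since it is the only one not reducible to a direct separation or regularity computation. Because $G$ is compact, if $G$ were a paratopological group then by Lemma~\ref{CompTG} it would be a topological group, hence $T_3$; combined with the $T_1$ property this forces $T_2$, contradicting the first observation above. Therefore $1\Rightarrow 3$, establishing all the required implications. A self-contained alternative avoiding Lemma~\ref{CompTG} is to check that the multiplication fails to be jointly continuous at $(e,e)$: for infinite $G$ and any two nonempty open (hence cofinite) sets $U,V$ one has $UV=G$, because for every $g\in G$ the cofinite sets $\{u^{-1}g:u\in U\}$ and $V$ must intersect; thus no cofinite $U,V$ satisfy $UV\subseteq G\setminus\{a\}$ for some $a\neq e$, and $G$ is not a paratopological group.
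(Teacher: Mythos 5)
Your proposal is correct, and the routine parts ($T_1$, separate continuity, $3\Rightarrow{*}$, and $2.{*}\Rightarrow 3$ via the density of every nonempty open set in an infinite cofinite space) coincide with the paper's proof. The one place you genuinely diverge is the key implication $1\Rightarrow 3$: your primary argument observes that the cofinite topology is compact and invokes Lemma~\ref{CompTG} to conclude that an infinite paratopological $G$ would be a topological group, hence $T_3$ and (with $T_1$) Hausdorff, contradicting the failure of $T_2$; the paper instead argues directly that joint continuity fails at the unit, producing from a hypothetical finite $F$ with $(G\setminus F)^2\subset G\setminus\{x\}$ a point $y\notin F\cup xF^{-1}$ with $x\in y(G\setminus F)$. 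Your route buys brevity at the cost of importing the compactness machinery (which is arguably against the spirit of an illustrative example placed right after Proposition~\ref{pr:3}), while the paper's computation is self-contained and elementary. Your own ``self-contained alternative'' --- showing $UV=G$ for any two cofinite $U,V$ because $U^{-1}g$ and $V$ are both cofinite and hence meet --- is essentially the paper's argument in a slightly stronger and cleaner form, so on the whole you have both recovered the paper's proof and added a second, structurally different one.
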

\begin{proof}The continuity of shifts on the group $G$ and implications $3\Rightarrow *$
are obvious, implications $2.*\Rightarrow 3$ follows from the fact that if the group $G$ is
infinite then each nonempty open subset of $G$ is dense in $G$.
It remains to show an implication $1\Rightarrow 3$. Suppose to the contrary that $G$ is an infinite
paratopological group. Pick an element $x\in G\setminus\{e\}$. Since the multiplication at
the unit of $G$ is continuous, there exists a finite set $F\subset G\setminus\{e\}$ such that
$(G\setminus F)^2\subset G\setminus\{x\}$. Since the group $G$ is infinite, there exists
a point $y\in G\setminus (F\cup xF^{-1})$. Then $y(G\setminus F)\ni x$, a contradiction.
\end{proof}


\begin{example}\label{ex:2} There exists a $T_1$ quasiregular compact quasitopological group $G$,
which is not a paratopological group.
Let $G=\mathbb T=\{z\in\mathbb C:|z|=1\}$ be the unit circle. We define an open base
$\mathcal B$ at the unit of a topology of a semitopological group on $G$
by putting $\mathcal B=\{U_n:0<n\in\mathbb Z\}$, where $U_n=\{z\in\mathbb C\setminus\{(-1,0)\}:
\arg z\in (-1/n,1/n)\cup (\pi-1/n,\pi+1/n)\}$.$\square$
\end{example}

\begin{example}\label{ex:3} There exists a $T_2$ quasiregular sequentially compact quasitopological group $G$,
which is not a paratopological group.
Let $$G=\Sigma_{\omega_1}\mathbb Z_2=\{x\in\mathbb Z_2^{\omega_1}:|\{\alpha:x_\alpha\ne
0\}|\le\omega\}.$$
Put $\mathcal B=\{U_A\setminus S:A$ is a finite subset of $\omega_1\}$, where
$$U_A=\{x\in G:x_\alpha=x_\beta\mbox{ for each }\alpha,\beta\in A\}$$ and
$$S=\{x\in G:x_0=1\mbox{ and }
x_\gamma\ge x_{\delta}     \mbox{ for each }\gamma<\delta<\omega_1\}.$$

We claim that the family $\mathcal B$ satisfies Pontrjagin conditions {\em
(}see~\cite[Proposition 1]{Rav}{\em )}. Indeed, the one non-evident of these conditions for the family
$\mathcal B$ is: for each $U\in\mathcal B$ and for each point $x\in U$ there exists $U'\in\mathcal
B$ such that $x+U'\subset U$. Let's check this. Let $\mathcal B\ni U=U_A\setminus S$, where $A$ is
a finite subset of $\omega_1$ and $x\in U$. If $x=0$ then it suffices to put $U'=U$. If $x\ne 0$
then there exists an index $\gamma'\in\omega_1$ such that $x_{\gamma'}=1$. Since $x\in
\Sigma_{\omega_1}\mathbb Z_2$, there exists an index $\gamma'<\delta'<\omega_1$ such that
$x_{\delta'}=0$. Since $x\not\in S$, there exist indexes $\gamma'',\delta''\in\omega_1$,
$\gamma''<\delta''$ such that $x_{\gamma''}=0$ and $x_{\delta''}=1$. Put
$A'=A\cup\{\gamma',\gamma'',\delta',\delta''\}$ and $U'=U_{A'}$. Then $x+U'\subset U$. Hence the
family $\mathcal B$ is an open base at the unit of a topology of a semitopological group on $G$.
Denote this topology as $\tau$.  Since $U_{A'}\supset\overline{U_{A'}\setminus S}$, the group
$(G,\tau)$ is quasiregular. Since the set $U_A$ is a group for any subset $A$ of $\omega_1$ and
$\bigcap\{U_A:A$ is a finite subset of $\omega_1\}=\{0\}$, the group $(G,\tau)$ is $T_2$. Since
the topology $\tau$ is weaker than the sequentially compact topology on the set
$\Sigma_{\omega_1}\mathbb Z_2$, induced from the Tychonoff product, the group $(G,\tau)$ is
sequentially compact too. At last, to show that $(G,\tau)$ is not a paratopological group, it suffices
to show that for any finite set $A\subset\omega_1$ there exist points $x,y\in U_A\setminus S$ such
that $x+y\in S$. Fix arbitrary two indexes $\alpha,\beta\in\omega_1$ such that $\sup
A<\alpha<\beta$. For each $\gamma\in\omega_1$ put $x_\gamma=1$ if $\gamma\in\{\alpha,\beta\}$ and
$x_\gamma=0$ otherwise. For each $\gamma\in\omega_1$ put $y_\gamma=1$ if $\alpha\ne \gamma\le\beta$
and $y_\gamma=0$ otherwise.
\end{example}

Recall that a topological group $G$ is {\it precompact} if for each neighborhood $U$ of
the unit of $G$ there exists a finite subset $F$ of $G$ such that $FU=G$ (or, equivalently $UF=G$).

\begin{proposition}\label{NoSimpleExamples} Let $(G,\sigma)$ be a $T_2$ precompact topological group,
$(G,\tau)$ be a 
weakly semiregular semitopological group and $\tau\subset\sigma$. Then
$(G,\tau)$ is a topological group.
\end{proposition}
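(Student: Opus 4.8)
The plan is to exploit precompactness of $\sigma$ to reduce to the compact case already settled in Proposition~\ref{pr:3}. Since $(G,\sigma)$ is a $T_2$ precompact topological group, its Ra\u{\i}kov completion $(\tilde G,\tilde\sigma)$ is a compact $T_2$ topological group containing $G$ as a dense subgroup with $\tilde\sigma|_G=\sigma$. I would transport the weakly semiregular topology $\tau$ from $G$ to a compact weakly semiregular semitopological group topology $\hat\tau$ on $\tilde G$, apply Proposition~\ref{pr:3} to conclude that $(\tilde G,\hat\tau)$ is a topological group, and then recover the continuity of the operations of $(G,\tau)$ by restricting to the subgroup $G$.

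For the transport, fix a base $\mathcal B$ at the unit of $(G,\tau)$ consisting of regular open sets (available by weak semiregularity) and, for each $U\in\mathcal B$, put $\hat U=\int_{\tilde\sigma}\ol U^{\tilde\sigma}$. First I would verify that $\hat{\mathcal B}=\{\hat U:U\in\mathcal B\}$ satisfies the Pontrjagin-type conditions of~\cite[Proposition 1]{Rav} for a base at the unit of a semitopological group topology $\hat\tau_0$ on $\tilde G$; since left and right translations of the compact topological group $(\tilde G,\tilde\sigma)$ commute with $\int_{\tilde\sigma}$ and $\ol{\cdot}^{\tilde\sigma}$, these conditions can be lifted from the corresponding ones for $\mathcal B$ in $(G,\tau)$ with the help of the density of $G$. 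Each $\hat U$ is $\tilde\sigma$-open, so $\hat\tau_0\subset\tilde\sigma$ and $(\tilde G,\hat\tau_0)$ is compact; replacing $\hat\tau_0$ by its semiregularization $\hat\tau=(\hat\tau_0)_{sr}\subset\tilde\sigma$ gives, by the quoted fact that the semiregularization of a semitopological group is a weakly semiregular semitopological group, a weakly semiregular compact semitopological group $(\tilde G,\hat\tau)$. Proposition~\ref{pr:3} then yields that $(\tilde G,\hat\tau)$, and hence its subgroup $(G,\hat\tau|_G)$, is a topological group.

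Finally I would identify $\hat\tau|_G$ with $\tau$. The key computation is $\int_{\tilde\sigma}\ol U^{\tilde\sigma}\cap G=\int_\sigma\ol U^\sigma$, which follows from $\ol U^{\tilde\sigma}\cap G=\ol U^\sigma$ and the density of $G$; together with $\ol U^\sigma\subset\ol U^\tau$ it gives the sandwich $U\subset\hat U\cap G\subset\ol U^\tau$. Consequently $\hat\tau|_G\subset\tau$, and the sandwich forces $\tau$ and $\hat\tau|_G$ to have the same regular open sets, hence the same semiregularization. Since $(G,\tau)$ is weakly semiregular, $\tau=\tau_{sr}$; since $(G,\hat\tau|_G)$ is a topological group it is $T_3$, hence weakly semiregular, so $\hat\tau|_G=(\hat\tau|_G)_{sr}$. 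As the two semiregularizations coincide, $\hat\tau|_G=\tau$, so $(G,\tau)$ is a topological group. The main obstacle is precisely this matching step together with the axiom check of the previous paragraph: because $\sigma$ is strictly finer than $\tau$ the two topologies have different closure operators, one never has $\hat U\cap G=U$ on the nose, and the whole argument turns on controlling the gap between the $\sigma$- and $\tau$-regularizations tightly enough — both to verify the Pontrjagin conditions for $\hat{\mathcal B}$ and to see that passing to the semiregularization restores precisely $\tau$ on $G$.
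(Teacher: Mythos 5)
Your overall strategy --- pass to the compact Ra\v\i kov completion and reduce to Proposition~\ref{pr:3} --- is in the same spirit as the paper's proof, which also works in the completion $(\hat G,\hat\sigma)$; but where the paper explicitly builds the closed normal subgroup $N=\bigcap\{\overline{U}^{\hat\sigma}:e\in U\in\tau\}$ and the group topology $\hat\sigma_N=\{\hat WN:\hat W\in\hat\sigma\}$ and then proves $\hat\sigma_N|G=\tau$, you try to transport $\tau$ via the regular-open envelopes $\hat U=\operatorname{int}_{\tilde\sigma}\overline{U}^{\tilde\sigma}$. The genuine gap is the sentence asserting that the Pontrjagin conditions for $\hat{\mathcal B}$ ``can be lifted \dots with the help of the density of $G$.'' For a point $x\in\hat U\setminus G$ you must produce $U'\in\mathcal B$ with $x\hat{U'}\subset\hat U$, and density only gives you some $g\in U$ with $g^{-1}x$ in a prescribed small symmetric $\tilde\sigma$-neighborhood $\hat W$ of $e$; choosing $U'$ with $gU'\subset U$ then yields only $x\hat{U'}\subset g\hat W\,\overline{U'}^{\tilde\sigma}$, and there is no reason for the perturbed set $\hat W\,\overline{U'}^{\tilde\sigma}$ to be comparable with $\overline{U'}^{\tilde\sigma}$: the set $\hat{U'}$ is ``large'' (it contains $N$), so it is not controlled by shrinking $\hat W$. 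To make this work one has to first prove that $N=\bigcap_{U'}\overline{U'}^{\tilde\sigma}$ is a subgroup satisfying $\overline{U}^{\tilde\sigma}N\subset\overline{U}^{\tilde\sigma}$ (so that each $\hat U$ is $N$-saturated) and then use compactness of the downward directed family $\{x\overline{U'}^{\tilde\sigma}\}$, which shrinks to $xN\subset\hat U$; the conjugation condition for $g\in\tilde G\setminus G$ needs the same treatment. In other words, the step you dismiss in one sentence contains essentially all of the content of the paper's proof.

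A second, smaller defect: the concluding claim that the sandwich $U\subset\hat U\cap G\subset\overline{U}^{\tau}$ ``forces $\tau$ and $\hat\tau|G$ to have the same regular open sets'' is not a valid deduction --- a set squeezed between $U$ and $\overline{U}^{\tau}$ need not have the same $\tau$-regularization as $U$ unless it is known to be $\tau$-open. The repair is available and mirrors how the paper closes its argument: once the Pontrjagin conditions are verified, $\hat U$ is $\hat\tau_0$-open, hence $\hat U\cap G$ is $\hat\tau_0|G$-open, hence $\tau$-open by the left half of your sandwich; being a $\tau$-open subset of $\overline{U}^{\tau}$ it lies in $\operatorname{int}_\tau\overline{U}^{\tau}=U$ because $U$ is $\tau$-regular open, so in fact $\hat U\cap G=U$ and $\hat\tau_0|G=\tau$ on the nose, with no need for the semiregularization comparison at the end. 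So the architecture is salvageable, but as written the proposal omits its central verification.
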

\begin{proof} Let $(\hat G,\hat\sigma)$ be a Ra\v\i kov completion of the group $(G,\sigma)$.
Since the group $G$ is a dense precompact subset of the group $(\hat G,\hat\sigma)$,
by Corollary 3.7.6 from~\cite{ArhTka}, the group $(\hat G,\hat\sigma)$ is precompact.
Since the group $(\hat G,\hat\sigma)$ is Ra\v\i kov complete, by Theorem 3.7.15 from ~\cite{ArhTka}
it is compact.

In this proof as $\overline{\cdot}$ we denote the closure with respect to the topology $\hat\sigma$.

Put $N=\bigcap\{\overline{U}:e\in U\in\tau\}$. We claim that $N$ is a normal subgroup of the group
$(\hat G,\hat\sigma)$. Indeed, let $x,y$ be any elements of $N$, $U\in\tau$
be an any open neighborhood of the unit of the group $G$, and
$\hat W=(\hat W)^{-1}\in\hat\sigma$ be any symmetric open neighborhood of the unit of the group $\hat G$.
Then there exists an element $u\in U\cap \hat Wx$. There exists an open
neighborhood $V\in\tau$ of the unit of the group $G$ such that $uV\subset U$.
Then there exists an element $v\in V\cap y\hat W$. Then $xy\in \hat Wuv\hat W\subset\hat  WU\hat W$. Since this holds for
any symmetric open neighborhood $\hat W=(\hat W)^{-1}\in\hat\sigma$ of the unit of the group $\hat G$,
$xy\in \overline{U}$. Since this holds for
any open neighborhood $U\in\tau$ of the unit of the group $G$, $xy\in \bigcap\{\overline{U}:e\in
U\in\tau\}=N$. So $N$ is a closed subsemigroup of a $T_2$ compact topological group $(\hat G,\hat\sigma)$. By Lemma~\ref{CancComp}, $N$ is a group.
Let $g$ be any element of the group $G$ and $U\in\tau$ be any
open neighborhood of the unit of the group $G$. Since $(G,\tau)$ is a semitopological group
and $g^{-1}eg=e$ there exists an open neighborhood $V\in\tau$ of the unit of the group $G$ such
that $g^{-1}Vg\subset U$. By continuity of multiplication on the group $(\hat G,\hat\sigma)$,
$g^{-1}Ng\subset g^{-1}\overline{V}g\subset\overline{U}$.
Since this holds for any open neighborhood $U\in\tau$ of the unit of the group $G$,
$g^{-1}Ng\subset\bigcap\{\overline{U}:e\in U\in\tau\}=N$. Now suppose that there exists
an element $\hat g$ of the group $\hat G$ such that $(\hat g)^{-1}N\hat g\not\subset N$.
Then there exists an element $x\in N$ such that $(\hat g)^{-1}x\hat g\not\in N$. Since $N$ is a
closed subset of the group $(\hat G,\hat\sigma)$ and the multiplication on the group
$(\hat G,\hat\sigma)$ is continuous, there exists a
symmetric open neighborhood $\hat W=(\hat W)^{-1}\in\hat\sigma$
of the unit of the group $\hat G$ such that
$\hat W(\hat g)^{-1}x\hat g\hat W\cap N=\varnothing$. Since the group $(G,\sigma)$ is dense in its
completion $(\hat G,\hat\sigma)$, there exists an element $g\in G\cap \hat g\hat W$. But then
$g^{-1}xg\in \hat W(\hat g)^{-1}x\hat g\hat W\not\in N$, a contradiction. Therefore
$(\hat g)^{-1}N\hat g\subset N$ for each element $(\hat g)\in\hat G$. Thus $N$ is
a normal subgroup of the group $\hat G$.

Define a topology $\hat\sigma_N$ on the group $\hat G$ by putting
$\hat\sigma_N=\{\hat WN: \hat W\in\hat\sigma\}$. It is easy to check that $(\hat G,\hat\sigma_N)$ is
a topological group. We claim that $\hat\sigma_N|G=\tau$. Let's check this.

$(\hat\sigma_N|G\subset \tau)$
Let $\hat W\in\hat\sigma$ be any non-empty set and $x\in \hat WN\cap G$ be any
point. Then $e\in x^{-1}\hat WN$, so $\bigcap\{\overline{U}:e\in U\in\tau\}=N\subset x^{-1}\hat
WN$. Since $x^{-1}\hat WN$ is an open subset of the compact group $(\hat G,\hat\sigma_N)$,
there exists a set $e\in U\in\tau$ such that $\overline{U}\subset x^{-1}\hat WN$. Then
$xU$ is a neighborhood of the point $x$ in the topology $\tau$ and $xU\subset \hat WN\cap G$.

$(\hat\sigma_N|G\supset \tau)$ Let $U\in\tau$ be any open neighborhood of the unit of the group $G$.
We claim that $\overline{U}N\subset\overline{U}$. Indeed, let
$x$ be any element of the set $U$. There exists an
open neighborhood of $V\in\tau$ the unit of the group $G$ such that
$xV\subset U$. Then $xN\subset x\overline{V}\subset\overline{U}$.
Since this inclusion holds for any element $x$ of the set $U$,
we see that $UN\subset\overline{U}$. Let $y$ be any element of the set $N$.
Then $Uy\subset\overline{U}$ and
$U\subset\overline{U}y^{-1}$. Since the set $\overline{U}y^{-1}$ is closed in the group
$(\hat G,\hat\sigma_N)$, we see that $\overline{U}\subset\overline{U}y^{-1}$.
At last, since this inclusion holds for any element $y$ of the set $N$,
we see that $\overline{U}N\subset\overline{U}$.
Since $\hat\sigma|G\supset\tau$, there exists
an open neighborhood $\hat W\in\hat\sigma$ of the unit of the group $G$ such that
$\hat W\cap G\subset U$. Since the set $G$ is dense in the space $(\hat G,\hat\sigma)$,
$\hat W\subset\overline{\hat W\cap G}\subset\overline{U}$. Then $\hat WN\subset\overline{U}N\subset\overline{U}$.
But $\hat WN\cap G\in\tau$, because $\hat\sigma_N|G\subset \tau$.
Therefore $\hat WN\cap G\subset\operatorname{int}_\tau (\overline{U}\cap G)
\subset\operatorname{int}_\tau \overline{U}^\tau$ (we have $\overline{U}\cap G\subset
\overline{U}^\tau$, because $\hat\sigma|G\supset\tau$). At last, since $U\in\tau$ is any open neighborhood
of the unit of the weakly semiregular group $G$, we have that $(\hat\sigma_N|G\supset \tau)$.

Thus, since $\hat\sigma_N|G=\tau$, $(G,\tau)$ is a topological group.
\end{proof}


\begin{proposition} Let $(G,\sigma)$ be a $T_2$ feebly compact paratopological group,
$(G,\tau)$ be a $T_3$ semitopological group and $\tau\subset\sigma$. Then
$(G,\tau)$ is a topological group.
\end{proposition}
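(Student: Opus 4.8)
The plan is to reduce the statement to Proposition~\ref{NoSimpleExamples} by replacing $\sigma$ with its semiregularization. Let $\sigma_{sr}$ denote the semiregularization of $\sigma$. Since $(G,\sigma)$ is a paratopological group, $(G,\sigma_{sr})$ is a $T_3$ paratopological group (see \cite{Rav2}). Because the identity map $\id\colon(G,\sigma)\to(G,\sigma_{sr})$ is a continuous surjection, and a continuous surjective image of a feebly compact space is feebly compact (preimages of a locally finite family of nonempty open sets form such a family), the group $(G,\sigma_{sr})$ is feebly compact. As a $T_3$ space it is quasiregular, so by Proposition~3 of \cite{Rav4} the feebly compact quasiregular paratopological group $(G,\sigma_{sr})$ is a topological group.

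Next I would check that $(G,\sigma_{sr})$ is a $T_2$ precompact topological group. For Hausdorffness it suffices, since $(G,\sigma_{sr})$ is a topological group, to show $\overline{\{e\}}^{\sigma_{sr}}=\{e\}$, that is, that no $g\ne e$ lies in every $\sigma_{sr}$-neighbourhood of $e$. Given $g\ne e$, pick by $T_2$-ness of $\sigma$ disjoint $\sigma$-open sets $A\ni e$ and $B\ni g$; then $\int_\sigma\overline{A}^\sigma$ is a regular open, hence $\sigma_{sr}$-open, neighbourhood of $e$ not containing $g$ (otherwise $g\in\overline{A}^\sigma$, contradicting $A\cap B=\varnothing$). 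Thus $(G,\sigma_{sr})$ is $T_1$ and therefore $T_2$. A $T_2$ topological group is Tychonoff, so by \cite[Theorem 3.10.22]{Eng} the feebly compact group $(G,\sigma_{sr})$ is pseudocompact, and every pseudocompact topological group is precompact (see \cite{ArhTka}). Hence $(G,\sigma_{sr})$ is a $T_2$ precompact topological group.

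The decisive step, and the one place where the hypothesis that $(G,\tau)$ is $T_3$ is genuinely used, is to show $\tau\subset\sigma_{sr}$. Let $W\in\tau$ and $x\in W$. By $T_3$-ness of $(G,\tau)$ there is a $\tau$-open $U$ with $x\in U\subset\overline{U}^\tau\subset W$. Since $\tau\subset\sigma$, the set $U$ is $\sigma$-open and $\overline{U}^\sigma\subset\overline{U}^\tau\subset W$, so $\int_\sigma\overline{U}^\sigma$ is a regular open set of $\sigma$ with $x\in U\subset\int_\sigma\overline{U}^\sigma\subset W$. As the regular open sets of $\sigma$ form a base of $\sigma_{sr}$, this shows $W\in\sigma_{sr}$, whence $\tau\subset\sigma_{sr}$. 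Note that a naive attempt to compare regular open sets of $\tau$ directly with those of $\sigma$ fails, because $\sigma$ is finer and so has larger interiors; it is essential to pass through the $\tau$-closed neighbourhood $\overline{U}^\tau$ first and only then take the smaller $\sigma$-closure.

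Finally, $(G,\sigma_{sr})$ is a $T_2$ precompact topological group, $(G,\tau)$ is weakly semiregular (being $T_3$), and $\tau\subset\sigma_{sr}$; Proposition~\ref{NoSimpleExamples}, applied with $\sigma_{sr}$ in the role of the precompact topology, then yields that $(G,\tau)$ is a topological group. I expect the main obstacle to be the coordination of the two topologies in the inclusion $\tau\subset\sigma_{sr}$ together with confirming that $\sigma_{sr}$ is genuinely $T_2$ and precompact, since only then can the precompact-case Proposition~\ref{NoSimpleExamples} be invoked; the remainder is an assembly of the quoted automatic-continuity and compactness results.
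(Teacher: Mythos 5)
Your proposal is correct and follows essentially the same route as the paper: semiregularize $\sigma$, apply Proposition~3 of \cite{Rav4} to see that $(G,\sigma_{sr})$ is a (precompact $T_2$) topological group, use $T_3$-ness of $\tau$ to verify $\tau\subset\sigma_{sr}$, and conclude via Proposition~\ref{NoSimpleExamples}. The only difference is that you spell out some details (preservation of feeble compactness, Hausdorffness of $\sigma_{sr}$, precompactness via pseudocompactness) that the paper leaves implicit.
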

\begin{proof} The group $G$ endowed with the topology $\sigma_{sr}$ is
feebly compact $T_2$ and $T_3$ paratopological group. By~\cite[Proposition 3]{Rav4}, $(G,\sigma_{sr})$
is feebly compact topological group. Hence the group $(G,\sigma_{sr})$ is precompact.
Let $U\in\tau$ be an arbitrary set and $x\in U$ be an arbitrary point.
Since topology $\tau$ is $T_3$, there exists an open neighborhood $V\in\tau$
of the point $x$ such that $\overline{V}^\tau\subset U$.
Since $\tau\subset\sigma$, $V\in\sigma$. Then
$x\in V=\operatorname{int}_\tau V\subset \operatorname{int}_\tau\overline{V}^\sigma\subset
\operatorname{int}_\sigma\overline{V}^\sigma\subset
\operatorname{int}_\sigma\overline{V}^\tau\subset
\overline{V}^\tau\subset U$. Since
$\operatorname{int}_\sigma\overline{V}^\sigma\in\sigma_{sr}$,
$\tau\subset\sigma_{sr}$, and $(G,\sigma_{sr})$ is a weakly semiregular space,
by Proposition~\ref{NoSimpleExamples}, $(G,\tau)$ is a topological group.
\end{proof}

\section{Acknowledgements}
The author is grateful to Warren Moors for providing him the article~\cite{Moo} .


\begin{thebibliography}{}

\bibitem[AlaSan]{AlaSan}
O.T. Alas, M. Sanchis,
{\it Countably Compact Paratopological Groups}, Semigroup Forum, {\bf 74} (2007), 423--438.

\bibitem[ArhChoKen]{ArhChoKen} 
A. Arhangel'skii, M. Choban, P. Kenderov,
{\it Topological games and topologies on groups}, Math. Maced., {\bf 8} (2010), 1-19.

\bibitem[ArhRez]{ArhRez}
A. Arhangel'ski\u{\i}, E. A. Reznichenko,
{\it Paratopological and semitopological groups versus topological groups},
Topology and its Applications, {\bf 151} (2005), 107--119.

\bibitem[ArhTka]{ArhTka}
A. Arhangel'skii, M. Tkachenko,
{\it Topological groups and related structures},
Atlantis Press, Paris; World Sci. Publ., NJ, 2008.

\bibitem[BanRav]{BanRav}
T. Banakh, A. Ravsky,
{\it The regularity of quotient  paratopological groups},
Matematychni Studii, {\bf 49}:2 (2018), 144--149.
\url{http://matstud.org.ua/texts/2018/49_2/144-149.html}

\bibitem[BanRav2]{BanRav2}
T. Banakh, A. Ravsky,
{\it Each regular paratopological group is completely regular},
PAMS {\bf 145}:3 (2017), 1373--1382.
\url{http://arxiv.org/abs/1410.1504}

\bibitem[Bou]{Bou} Ahmed Bouziad,
{\it Every \u{C}ech-analytic Baire semitopological group is a topological group},
Proc. Amer. Math. Soc. {\bf 124} (1996), 953--959.

\bibitem[Bra]{Bra} N. Brand,
{\it Another note on the continuity of the inverse},
Arch. Math. {\bf 39}, 241--245. MR 84b:22001.

\bibitem[DikProSto]{DikProSto}
D. Dikranjan, I. Prodanov, L. Stoyanov,
{\it Topological Groups: Characters Dualities and Minimal Group Topologies}, (2nd edn.),
Monographs and Textbooks in Pure and Applied Mathematics, Vol. {\bf 130}, Marcel Dekker, New York (1989).

\bibitem[Dor-AldSha]{Dor-AldSha}
A. Dorantes-Aldama, D. Shakhmatov,
{\it Selective  sequential  pseudocompactness},
Topology Appl., {\bf 222} (2017), 53--69.
\url{https://arxiv.org/abs/1702.02055}

\bibitem[DRRT]{DRRT}
E. K. van Douwen, G. M. Reed, A. W. Roscoe, and I. J. Tree,
{\it Star covering properties},
Top. Appl. {\bf 39}:1 (1991), 71--103.

\bibitem[Ell1]{Ell1}
R. Ellis,
{\it Locally compact transformation groups},
Duke Math. J. {\bf 24} (1957), 119--125.

\bibitem[Ell2]{Ell2}
R. Ellis,
{\it A note on the continuity of the inverse,}
Proc. Amer. Math. Soc., {\bf 8} (1957), 372--373.

\bibitem[Eng]{Eng}
R. Engelking,
{\it General topology},
Revised and completed ed., Berlin:Heldermann, 1989.

\bibitem[GutRav]{GutRav}
O. Gutik, A. Ravsky,
{\it On feebly compact inverse primitive (semi)topological semigroups}
Matematychni Studii {\bf 44}:1 (2015), 3--26.
\url{http://matstud.org.ua/texts/2015/44_1/3-26.html}

\bibitem[GutRav2]{GutRav2}
O. Gutik, A. Ravsky,
{\it On old and new classes of feebly compact spaces},
Visnyk of the Lviv Univ. Series Mech. Math. {\bf 85} (2018), 48-59.
\url{https://arxiv.org/abs/1804.07454}

\bibitem[Kat]{Kat}
M. Kat\u{e}tov. {\it On H-closed extensions of topological spaces},
~\u{C}asopis P\u{e}st. Mat. Fys. {\bf 72} (1947), 17--32.

\bibitem[KKM]{KKM}
P. S. Kenderov, I. S. Kortezov, W. B. Moors,
{\it Topological games and topological groups}
Topology Appl. {\bf 109} (2001), 157--165.
\url{https://www.sciencedirect.com/science/article/pii/S0166864199001522?via\%3Dihub}

\bibitem[Kor1]{Kor1}
A. Korovin,
{\it Continuous actions of Abelian groups and topological properties in $C_p$-theory},
Ph.D. Thesis, Moskow State University, Moskow (1990) (in Russian).

\bibitem[Kor2]{Kor2}
A. Korovin,
{\it Continuous actions of pseudocompact groups and the topological group axioms},
Deposited in VINITI, \#3734-D, Moskow (1990) (in Russian).

\bibitem[Lip]{Lip}
P. Lipparini,
{\it A very general covering property},
\url{http://arxiv.org/abs/1105.4342}

\bibitem[Mat]{Mat}
M.~Matveev,
{\it A survey on star covering properties},
\url{http://at.yorku.ca/v/a/a/a/19.htm}.

\bibitem[Mon]{Mon}
D. Montgomery,
{\it Continuity in topological groups},
Bull. Amer. Math. Soc. {\bf 42} (1936), 879--882.

\bibitem[Moo]{Moo}
Warren B. Moors,
{\it Some Baire semitopological groups that are topological groups},
Topology Appl. {\bf 230} (2017), 381–-392.

\bibitem[Pfi]{Pfi} H. Pfister,
{\it Continuity of the inverse},
Proc. Amer. Math. Soc, {\bf 95} (1985), 312--314. MR 46:3680.

\bibitem[Pon]{Pon}
L. Pontrjagin,
{\it Continuous groups},
M.:Nauka, 1973, (in Russian).

\bibitem[Rav]{Rav}
A. Ravsky, {\it Paratopological groups I},
Matematychni Studii {\bf 16}:1 (2001), 37--48.
\url{http://matstud.org.ua/texts/2001/16_1/37_48.pdf}

\bibitem[Rav2]{Rav2}
A. Ravsky,
{\it Paratopological groups II},
Matematychni Studii {\bf 17}:1 (2002), 93--101.
\url{http://matstud.org.ua/texts/2002/17_1/93_101.pdf}

\bibitem[Rav3]{Rav3}
A. Ravsky,
{\it The topological and algebraical properties of paratopological groups},
Ph.D. Thesis. -- Lviv University, 2002
(in Ukrainian).

\bibitem[Rav4]{Rav4}
A. Ravsky,
{\it Pseudocompact paratopological groups} (version 5).
\url{http://arxiv.org/abs/1003.5343v5}

\bibitem[Rav5]{Rav5}
A. Ravsky,
{\it Post \#209491 at MathOverflow},
\url{https://mathoverflow.net/questions/130419/suppose-g-mathcal-t-is-a-paratopological-group-and-a-b-in-g-and-every-nei/209491#209491}.

\bibitem[RavRez]{RavRez}
A. Ravsky, E. Reznichenko,
{\it The continuity of inverse in groups},
International Conference on Functional Analysis and its Applications Dedicated to the 110th
anniversary of Stefan Banach, Book of Abstracts, May 28-31, 2002, Lviv, 170--172.

\bibitem[Rez]{Rez}
E. Reznichenko.
{\it Extension of functions defined on products of pseudocompact spaces and continuity of the
inverse in pseudocompact groups},
Topology Appl. {\bf 59}, (1994), 33--44.
\url{https://www.sciencedirect.com/science/article/pii/0166864194900213}

\bibitem[Rez2]{Rez2}
E. Reznichenko,
{\it \u{C}ech complete semitopological group are topological groups},
(preprint).

\bibitem[Ste]{Ste}
R.~M. Stephenson, Jr,
{\it Initially $\kappa$-compact and related compact spaces},
in K. Kunen, J. E. Vaughan (eds.), Handbook of Set-Theoretic Topology, Elsevier, 1984, 603--632.

\bibitem[Sto]{Sto}
M. H. Stone. {\it Applications of the theory of Boolean rings to general topology},
Trans. Amer. Math. Soc. {\bf 41} (1937), 375--481.

\bibitem[TA]{TA} Seminar "Topology \& its Applications" at Chair of Geometry and Topology, Mechanics and Mathematics
Faculty, Ivan Franko National University of Lviv.
\url{http://www.math.lviv.ua/seminar/}

\bibitem[Tka]{Tka}
M. Tkachenko, {\it Semitopological and paratopological groups vs topological groups},
In: Recent Progress in General Topology III (K.P. Hart, J. van Mill, P. Simon, eds.),
2013, 803--859.

\bibitem[Tka2]{Tka2}
M. Tkachenko,
{\it Axioms of separation in semitopological groups and related functors},
Topology Appl. {\bf 161}, (2014), 364--376.

\bibitem[Vau]{Vau}
J.E. Vaughan,
{\it Countably compact and sequentially compact spaces},
in K. Kunen, J. E. Vaughan (eds.), Handbook of Set-Theoretic Topology, Elsevier, 1984, 569--602.

\bibitem[Wal]{Wal}
A. D. Wallace,
{\it The structure of topological semigroups},
Amer. Math. Soc. Bull. {\bf 61} (1955), 95-112. MR 16:796d.

\bibitem[XLT]{XLT}
L.-H. Xie, P. Li, and J.-J. Tu, {\it Notes  on  (regular) $T_3$-reflections  in  the  category
of semitopological  groups}, Topology Appl. {\bf 178}, (2014), 46--55.
\end{thebibliography}
\end{document}